\newtheorem{prethm}{{\bf Theorem}}
\newenvironment{thm}{\begin{prethm}{\hspace{-0.5
em}{\bf.}}}{\end{prethm}}
\newtheorem{precor}{{\bf Corollary}}
\newenvironment{cor}{\begin{precor}{\hspace{-0.5
em}{\bf.}}}{\end{precor}}
\newtheorem{preprop}{{\bf Proposition}}
\newenvironment{prop}{\begin{preprop}{\hspace{-0.5
em}{\bf.}}}{\end{preprop}}
\newtheorem{preque}{{\bf Question}}
\newtheorem{prerem}{{\bf Remark}}
\newenvironment{rem}{\begin{prerem}{\hspace{-0.5
em}{\bf.}}}{\end{prerem}}
\newtheorem{preques}{{\bf Question}}
\newtheorem{prelemma}{{\bf Lemma}}
\newenvironment{lemma}{\begin{prelemma}{\hspace{-0.5
em}{\bf.}}}{\end{prelemma}}
\newtheorem{prefact}{{\bf Fact}}
\newtheorem{preobs}{{\bf Observation}}
\newtheorem{prefig}{{\bf Figure}}
\newtheorem{prelemm}{{\bf Lemma}}
\newtheorem{preex}{{\bf Example}}
\newtheorem{prepro}{{\bf Proposition}}
\newtheorem{prelem}{{\bf Theorem}}
\newenvironment{lem}{\begin{prelem}{\hspace{-0.5
em}{\bf.}}}{\end{prelem}}
\newtheorem{preproof}{{\bf Proof.}}
\newenvironment{proof}[1]{\begin{preproof}{\rm
               #1}\hfill{$\rule{2mm}{2mm}$}}{\end{preproof}}
\newtheorem{preconj}{{\bf Conjecture}}
\newtheorem{predeff}{{\bf Definition}}
\newenvironment{deff}{\begin{predeff}{\hspace{-0.85
em}{\bf.}}}{\end{predeff}}
\def\newpic#1{}
\date{}
\begin{document}

\title{
{\Large{\bf Uniquely dimensional graphs}}}
%

{\small
\author{
{\sc Behrooz Bagheri Gh.}, {\sc Mohsen Jannesari},
{\sc Behnaz Omoomi}\\
[1mm]
{\small \it  Department of Mathematical Sciences}\\
{\small \it  Isfahan University of Technology} \\
{\small \it 84156-83111, \ Isfahan, Iran}}




 \maketitle \baselineskip15truept

\begin{abstract}
 A set $W\subseteq V(G)$ is called a  resolving set, if
for each two distinct vertices $u,v\in V(G)$ there exists $w\in W$
such that $d(u,w)\neq d(v,w)$,  where $d(x,y)$ is the distance
between the vertices $x$ and $y$. A resolving set for $G$ with
minimum cardinality is  called a  metric basis.
 A  graph with a unique metric
basis is called  a uniquely dimensional graph.
 In  this paper, we study some  properties  of uniquely dimensional graphs.
\\
\end{abstract}

{\bf Keywords:}  Resolving set; Metric basis;  Uniquely
dimensional.
\section{Introduction}
 Throughout the paper, $G=(V,E)$ is a
finite, simple, and connected  graph of order $n$. The distance
between two vertices $u$ and $v$, denoted by $d(u,v)$, is the
length of a shortest path between $u$ and $v$ in $G$. For a vertex
$v\in V(G)$, $\Gamma_i(v)=\{u \ |\ d(u,v)=i\}$. The diameter of
$G$ is ${\rm diam(G)}=\max\{d(u,v)\ |\ u,v\in V(G)\}$.   The girth
of $G$ is the length of a shortest cycle in~$G$.    The set of
all adjacent vertices to a vertex $v$ is denoted by $N(v)$ and
$|N(v)|$ is  the degree of a vertex $v$, $\deg(v)$. The maximum
degree and the minimum degree of a graph $G$, are denoted by
$\Delta(G)$ and $\delta(G)$, respectively. The notations $u\sim
v$ and $u\nsim v$ denote the adjacency and non-adjacency
relations between $u$ and $v$, respectively.

For an ordered set $W=\{w_1,w_2,\ldots,w_k\}\subseteq V(G)$ and a
vertex $v$ of $G$, the  $k$-vector
$$r(v|W)=(d(v,w_1),d(v,w_2),\ldots,d(v,w_k))$$
is called  the {\it metric representation}  of $v$ with respect
to $W$. The set $W$ is called a {\it resolving set} for $G$ if
distinct vertices have different metric representations.  A resolving
set for $G$ with minimum cardinality is  called a {\it metric
basis}, and its cardinality is the {\it metric dimension} of $G$,
denoted by $\beta(G)$. If $\beta(G)=k$, then $G$ is said to be
$k$-dimensional.

 In~\cite{Slater1975}, Slater introduced the idea of a resolving
set and used a {\it locating set} and the {\it location number}
for what we call a resolving set and the metric dimension,
respectively. He described the usefulness of these concepts when
working with U.S. Sonar and Coast Guard Loran stations.
Independently, Harary and Melter~\cite{Harary} discovered the
concept of the location number as well and called it the metric
dimension. For more results related to these concepts
see~\cite{cartesian product,bounds,sur1,landmarks}. The concept
of a resolving set has various applications in diverse areas
including coin weighing problems~\cite{coin}, network discovery
and verification~\cite{net2}, robot navigation~\cite{landmarks},
mastermind game~\cite{cartesian product}, problems of pattern
recognition and image processing~\cite{digital}, and
combinatorial search and optimization~\cite{coin}.

It is obvious that to see  whether a given set $W$ is a resolving
set, it is sufficient to consider the
 vertices in $V(G)\backslash W$, because
 $w\in W$ is the unique vertex in $G$ for which $d(w,w)=0$.
 When $W$ is a resolving set for $G$, we say
that $W$ {\it resolves} $G$. In general, we say an ordered set $W$
resolves a set $T\subseteq V(G)$, if for each two distinct
vertices $u,v\in T$, $r(u|W)\neq r(v|W)$.

The following bound is the known upper bound for the metric
dimension.
\begin{lem}~{\rm\cite{Ollerman}}\label{thm:B<n-d}
If $G$ is a connected graph of order $n$ and diameter $d$, then
$\beta(G)\leq n-d$.
\end{lem}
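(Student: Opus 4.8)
The plan is to produce an explicit resolving set of cardinality $n-d$. First I would choose vertices $u,v$ with $d(u,v)=d$ and fix a shortest $u$--$v$ path $P: u=x_0,x_1,\ldots,x_d=v$. The single structural ingredient I would invoke is that distances add along a geodesic, so that $d(x_i,x_j)=|i-j|$ for all $0\le i,j\le d$; in particular the distances $d(x_0,u),d(x_1,u),\ldots,d(x_d,u)$ equal $0,1,\ldots,d$ respectively, hence are pairwise distinct.

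Next I would take $W=V(G)\setminus\{x_1,\ldots,x_d\}$. Because the $d+1$ vertices of the geodesic $P$ are distinct, $|W|=n-d$, and note that the endpoint $u=x_0$ survives, i.e.\ $u\in W$. The remaining task is to check that $W$ resolves $G$, which I would split into the obvious cases. If $w,w'$ are distinct vertices of $W$, the coordinate of the representation indexed by $w$ separates them, since $d(w,w)=0<d(w',w)$. If $x_i$ with $1\le i\le d$ is compared with any $w\in W$, the same coordinate works, as $d(x_i,w)\ge 1>0=d(w,w)$ (note $x_i\neq w$). Finally, if $x_i,x_j$ with $1\le i<j\le d$ are two of the deleted vertices, the coordinate indexed by $u$ separates them because $d(x_i,u)=i\ne j=d(x_j,u)$. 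This exhausts all pairs, so $W$ is a resolving set and $\beta(G)\le|W|=n-d$.

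I do not expect a genuine obstacle here; the argument is short once one hits on the idea of deleting the interior of a diametral geodesic together with one of its endpoints. The only point that needs a moment's reflection is that a single well-chosen vertex --- here the endpoint $u$ --- already distinguishes all of the removed vertices from one another, so no extra elements are needed beyond those forced out of $W$; and the ubiquitous ``unique zero coordinate'' remark disposes of every pair involving a vertex of $W$. A reader might momentarily fear that discarding $d$ consecutive vertices could spoil the resolving property, but the additivity of distance along $P$ makes that worry evaporate.
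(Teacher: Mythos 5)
Your proof is correct and is exactly the standard argument behind this cited result: delete the $d$ non-$u$ vertices of a diametral geodesic and use the surviving endpoint $u$ to separate the deleted vertices, which is the same construction the paper itself invokes at the start of its proof of Theorem~\ref{thm:k<n-d-2}. No issues.
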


In \cite{randomly1, randomly2}, the properties of $k$-dimensional
graphs  in which every $k$ subset of vertices is a metric basis
are studied. Such graphs are called randomly $k$-dimensional
graphs. In the opposite point  there are  graphs which have a
unique metric basis.

\begin{deff}
A graph $G$ is called  {\rm uniquely dimensional} if $G$ has a
unique metric basis. A uniquely dimensional graph $G$ with
$\beta(G)=k$ is called   a {\rm uniquely $k$-dimensional} graph.
\end{deff}

In this paper, we  first obtain some upper bounds for the metric
dimension of uniquely dimensional graphs. Then, we give some
construction for uniquely $k$-dimensional graphs of the given
order. Finally, we obtain  a lower bound and an upper bound for
the minimum order of uniquely $k$-dimensional graphs in terms of
$k$.
\section{Some upper bounds }\label{main}

In this section we obtain  some upper bounds for the metric
dimension of uniquely dimensional graphs.

Two vertices $u,v\in V(G)$ are called {\it  twin} vertices if
$N(u)\setminus\{ v\}=N(v)\setminus \{u\}$. It is known that, if
$u$ and $v$ are twin vertices, then every resolving set $W$ for
$G$ contains at least one of the vertices $u$ and $v$. Moreover,
if $u\notin W$ then $(W\setminus v)\cup \{u\}$ is also a
resolving set for $G$. \cite{extermal}

 For a uniquely dimensional graph
we have the following fact.
\begin{lemma}\label{lem:notwin} If $G$ is a uniquely dimensional graph,
then  $G$ contains no twin vertices.
\end{lemma}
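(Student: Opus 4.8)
The plan is to argue by contradiction using the fact quoted immediately before the lemma. Suppose $G$ is uniquely dimensional with metric basis $W$, and suppose for contradiction that $G$ contains a pair of twin vertices $u,v$. By the cited result, every resolving set contains at least one of $u,v$; in particular $W$ contains $u$ or $v$. Without loss of generality assume $u\in W$. There are two cases according to whether $v\in W$ as well.

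The main case is $v\notin W$. Then the cited fact gives that $(W\setminus\{u\})\cup\{v\}$ is also a resolving set for $G$; it has the same cardinality $\beta(G)$, so it is again a metric basis. But $u\neq v$, so this set differs from $W$, contradicting the uniqueness of the metric basis. In the remaining case both $u,v\in W$, so $|W|\ge 2$. Here I would observe that twin vertices force a larger structure: since $N(u)\setminus\{v\}=N(v)\setminus\{u\}$, for every vertex $x\notin\{u,v\}$ we have $d(x,u)=d(x,v)$, so the only coordinate that can distinguish $u$ from $v$ in a metric representation is the one given by $u$ itself or by $v$ itself. Consequently, any resolving set must still contain $u$ or $v$, and moreover if it contains both, then removing either one and checking: the set $W\setminus\{v\}$ resolves all pairs not involving $v$ (inherited from $W$), and it resolves the pair $\{u, \cdot\}$ using $u$, but it may fail to have minimum cardinality issues — rather, the point is that $W$ was assumed to be of minimum cardinality, yet one checks $W\setminus\{v\}$ need not be resolving because some other pair might only be separated by $v$. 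So instead I would produce a different basis directly: starting from any metric basis $W$ and any twin pair, swapping $u$ for $v$ (when exactly one lies in $W$) already yields the contradiction, so it suffices to rule out that every metric basis contains \emph{both} twins.

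To handle that sub-case cleanly, I would use minimality more carefully. If some metric basis $W$ contains both $u$ and $v$, consider $W'=W\setminus\{u\}$. For any two distinct vertices $a,b\notin\{u\}$, if $W$ separates them using a coordinate other than $u$, then $W'$ still separates them; the only pairs at risk are those separated \emph{only} by the coordinate $u$. Since $v\in W'$ and $d(x,u)=d(x,v)$ for all $x\notin\{u,v\}$, the coordinate $u$ and the coordinate $v$ agree on every vertex except $u$ and $v$ themselves — and $v$ is already distinguished from everything by being the unique vertex with $d(\cdot,v)=0$. Hence $W$ never \emph{needs} the coordinate $u$ to separate a pair within $V(G)\setminus\{u\}$, so $W'$ resolves $V(G)\setminus\{u\}$; and $u$ itself is the unique vertex at distance $d(u,v)=1$ from $v$ among... — here is where care is needed, since there could be another vertex adjacent to $v$ that is also adjacent to exactly the same things, i.e. a third mutual twin. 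The clean statement is: $W'\cup\{u\}=W$ resolves $G$ and $u$ is redundant, so $W'$ resolves $G$, contradicting $|W|=\beta(G)$.

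I expect the main obstacle to be this last redundancy argument, namely verifying rigorously that when both twins lie in $W$ one of them can be dropped while preserving the resolving property; the subtlety is the possibility of several pairwise twins, where deleting one twin from $W$ might leave two remaining twins both outside $W$. I would resolve this by working with the equivalence classes of the twin relation: the cited fact generalizes to say every resolving set contains all but one vertex of each twin class, and two distinct metric bases are obtained by choosing which vertex of a nontrivial class to omit, or (if a class has size $\ge 3$) by permuting which members are included — either way producing a second metric basis and hence a contradiction. Once the case analysis is organized around twin classes rather than a single pair, the uniqueness hypothesis is contradicted in every case, completing the proof.
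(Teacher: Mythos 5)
Your first case (exactly one of the twins in the basis) is fine and is the same as the paper's: swapping $u$ for $v$ produces a second metric basis, contradicting uniqueness. The gap is in the case where both twins lie in the unique basis $W$. There your argument hinges on the assertion that ``$u$ is redundant, so $W'=W\setminus\{u\}$ resolves $G$,'' and that assertion is false in general --- indeed it is exactly what minimality forbids: as you correctly showed, $W'$ resolves $V(G)\setminus\{u\}$, but $u$ itself may share its representation over $W'$ with some vertex $w\notin W$, and since $|W|=\beta(G)$ such a $w$ \emph{must} exist (otherwise $W'$ would be a smaller resolving set). Your fallback via twin classes does not repair this: if the unique basis contains an \emph{entire} nontrivial twin class there is no omitted member to swap in and nothing to permute, so that device produces no second basis; moreover the offending vertex $w$ that collides with $u$ need not be a twin of $u$ at all, so organizing the argument around twin classes never confronts it.

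The missing idea is to exploit the \emph{failure} of $W\setminus\{u\}$ rather than trying to prove it succeeds. Since $W$ is minimum, $W\setminus\{u\}$ is not resolving; by your own observation the only unresolved pairs involve $u$ and a vertex outside $W$, and there is \emph{exactly one} such vertex $w$: two of them would have equal representations over $W\setminus\{u\}$, and since each has the same distance to $u$ as to $v$, the $u$-coordinate could not separate them either, so $W$ itself would fail to resolve them. Hence $(W\setminus\{u\})\cup\{w\}$ is a resolving set of cardinality $\beta(G)$ that differs from $W$, giving the desired contradiction. This is precisely the paper's argument for the second case.
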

\begin{proof}{
Let $B$ be the unique metric basis of $G$. If $u,v\in V(G)$ are twin vertices,
 then $u,v\in B$; otherwise we can replace the one in $B$ with the other one.
Now,  since $B\setminus \{u\}$ is not a basis of $G$, there is
exactly one vertex
 $w\in V(G)\setminus B$ such that $r(u|B\setminus\{u\})=r(w|B\setminus\{u\})$.
 Consequently, $(B\setminus\{u\})\cup\{w\}$ is a metric basis of $G$  different from $B$, which is a contradiction.
 }\end{proof}

\begin{thm}\label{thm:k<n-d-2}
If $G$ is a uniquely dimensional graph of order $n$ and   diameter
$d$, then $\beta(G)\leq n-d-2$.
\end{thm}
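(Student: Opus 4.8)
The plan is to start from the known bound $\beta(G)\le n-d$ (Lemma~A) and improve it by two, using the structure forced by uniqueness together with Lemma~\ref{lem:notwin}. Fix a diametral path $P\colon v_0,v_1,\dots,v_d$ realizing $d={\rm diam}(G)$, and let $B$ be the unique metric basis. The standard proof of $\beta(G)\le n-d$ shows that $V(P)\setminus\{v_0,v_d\}$, i.e.\ the $d-1$ internal vertices of $P$, can be excluded from some resolving set — more precisely, the vertices $v_1,\dots,v_{d-1}$ are pairwise resolved by the endpoints, so a basis need not contain them. I would first try to argue that the unique basis $B$ actually avoids \emph{all} of $v_1,\dots,v_{d-1}$ and at least one of $v_0,v_d$: if $v_i\in B$ for some internal $i$, I want to produce a second basis by a swap argument of the kind used in Lemma~\ref{lem:notwin}, namely remove $v_i$ from $B$; since $B\setminus\{v_i\}$ is not resolving, exactly one vertex $w\notin B$ collides with $v_i$, and then $(B\setminus\{v_i\})\cup\{w\}$ is a different basis, contradicting uniqueness — \emph{unless} the only possible collision partner $w$ already lies on $P$ in a way that cannot happen. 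The delicate point is ensuring the swap genuinely yields a \emph{distinct} basis; this is where I expect the main obstacle to sit.

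The cleaner route, and the one I would push, is to choose the diametral path more carefully so that two of its vertices become "redundant" in a way uniqueness forbids. Concretely: among all diametral paths, I would like one, or a small modification, in which two distinct vertices $x,y$ on the path satisfy $\Gamma_i(v_0)\ni x$ with $x$ resolved from everything off the path by $v_0$ alone, and similarly with a second vertex; then $B$ can miss both $x$ and $y$, giving $|B|\le n-(d+1)$, i.e.\ $\beta(G)\le n-d-1$. To get the extra $-1$, I would then invoke uniqueness once more: with $d+1$ vertices on $P$ all outside $B$, the endpoints $v_0,v_d$ each individually resolve the internal vertices, so I have \emph{two} candidate "defining" vertices; a counting/exchange argument then shows that if only $d+1$ path-vertices were outside $B$ we could trade one endpoint for its neighbor on $P$ and still resolve, producing a second basis. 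Hence at least $d+2$ vertices lie outside $B$.

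The key steps, in order, would be: (1) fix a diametral path $P$ and recall that $v_0$ resolves $\{v_1,\dots,v_d\}$ (distances $1,2,\dots,d$) and symmetrically $v_d$ resolves $\{v_0,\dots,v_{d-1}\}$; (2) show no internal vertex $v_i$ ($1\le i\le d-1$) lies in $B$, via the swap argument, using Lemma~\ref{lem:notwin} to rule out the degenerate cases where the collision partner fails to give a new basis; (3) show at least one endpoint, say $v_0$, lies outside $B$, again by a swap ($v_0$ and $v_1$ play interchangeable roles for some configurations, or else $v_0,v_1$ would be twins); (4) so far $\{v_1,\dots,v_{d-1},v_0\}\subseteq V(G)\setminus B$ gives $\beta(G)\le n-d-1$; (5) for the final improvement, suppose for contradiction $\beta(G)=n-d-1$, so exactly $d+2$ vertices avoid $B$ and among them the only "extra" vertex besides $P$'s is some $z$; analyze how $z$ is resolved and build a second basis by swapping $z$ with an appropriate path vertex, contradicting uniqueness. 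The main obstacle is step~(5): controlling that single extra non-basis vertex $z$ and showing the exchange it permits is nontrivial — this likely requires a careful case split on whether $z$ is resolved by $v_0$, by $v_d$, or by an interior basis vertex, and invoking Lemma~\ref{lem:notwin} to kill the twin-like exceptions in each case.
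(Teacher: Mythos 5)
Your reduction to $\beta(G)\le n-d-1$ is essentially right (the paper gets it in one line: $V(G)\setminus\{v_1,\dots,v_d\}$ and $V(G)\setminus\{v_0,\dots,v_{d-1}\}$ are two \emph{distinct} resolving sets of size $n-d$, so neither can be the unique basis), but the heart of the theorem is excluding $\beta(G)=n-d-1$, and your step~(5) does not do it. First, there is a miscount that undermines your whole plan for that step: if $\beta(G)=n-d-1$ then exactly $d+1$ vertices lie outside $B$, not $d+2$, and since the diametral path already has $d+1$ vertices the complement of $B$ may be exactly $V(P)$ --- there need be no ``extra vertex $z$'' to analyze at all. Second, the swap move you lean on in steps~(2), (3) and (5) is not sound as stated: if $B\setminus\{v_i\}$ fails to resolve, the colliding pair need not contain $v_i$ (two vertices of $V(G)\setminus B$ may differ only in their distance to $v_i$) and need not be unique, so $(B\setminus\{v_i\})\cup\{w\}$ need not be a resolving set. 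You flag this yourself as ``where the main obstacle sits,'' which is an accurate self-assessment: the obstacle is not resolved.

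What the paper actually does for this step is a global structural analysis of the distance levels $\Gamma_i=\Gamma_i(v_0)$ along the diametral path, repeatedly exhibiting \emph{two} resolving sets of size $n-d-1$ to force structure: each $\Gamma_i$ must be a clique or an independent set; if $|\Gamma_i|\ge 2$ then $\Gamma_i$ is completely joined to $\Gamma_{i-1}$; twin-freeness (Lemma~\ref{lem:notwin}) then forces $|\Gamma_i|\le 2$ with no two consecutive levels of size $2$; and finally the topmost level of size $2$ yields two distinct two-element bases $\{v_0,v_d\}$ and $\{v_0,y_j\}$, contradicting uniqueness. None of this is about a single non-basis vertex; it is a classification of the whole graph under the assumption $\beta(G)=n-d-1$. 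So the proposal as written has a genuine gap precisely at the step that carries the content of the theorem.
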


\begin{proof}{
Let $(v_0,v_1,\ldots,v_d)$ be a path of length $d$ in $G$. Two
sets  $V(G)\setminus\{v_1,v_2,\ldots,v_d\}$ and
$V(G)\setminus\{v_0,v_1,\ldots,v_{d-1}\}$ are two resolving set
of $G$ of size $n-d$. Hence, if $G$ is  uniquely dimensional, then
$\beta(G)\leq n-d-1$.
  To complete the proof
we show that $\beta(G)\neq n-d-1$.

Let $\beta(G)=n-d-1$ and for each $i$, $1\leq i\leq d$,
$\Gamma_i=\Gamma_i(v_0)$. We claim that for each $i$, $1\leq i\leq
d$, $\Gamma_i$ is an independent set or a clique; otherwise there
exists an $i$ for which $\Gamma_i$ contains vertices $x,y,z$ such
that $x\sim y$ and $x\nsim z$. Therefore,
$V(G)\setminus\{y,z,v_1,v_2,\ldots,v_{i-1},v_{i+1},\ldots,v_d\}$
is a metric basis of $G$.  Now, if $y\nsim z$, then
$V(G)\setminus\{x,z,v_1,v_2,\ldots,v_{i-1},v_{i+1},\ldots,v_d\}$
 and if $y\sim z$,  then
$V(G)\setminus\{x,y,v_1,v_2,\ldots,v_{i-1},v_{i+1},\ldots,v_d\}$
is another metric basis of $G$, respectively,  which both are
contradictions. Consequently, for each $i$, $1\leq i\leq d$,
$\Gamma_i$ is an independent set or a clique.

Now let for some $i,~1\leq i\leq d$, $|\Gamma_i|\geq 2$. Then, all
vertices in $\Gamma_i$ are adjacent to all vertices in
$\Gamma_{i-1}$; otherwise there exist $a\in  \Gamma_{i-1}$ and
$x\in \Gamma_i$ such that $a\nsim x$. Therefore, $x$ has a
neighbor in $\Gamma_{i-1}$,  say $b$. Assume that $y\in
\Gamma_i$  and $y\neq x$. Clearly $i \geq 2$. Thus,
$V(G)\setminus\{a,b,y,v_1,v_2,\ldots,v_{i-2},v_{i+1},\ldots,v_d\}$
is a metric basis of $G$. Now, if $y\sim a$, then
$V(G)\setminus\{b,x,y,v_1,v_2,\ldots,v_{i-2},v_{i+1},\ldots,v_d\}$,
and if $y\nsim b$, then
$V(G)\setminus\{a,x,y,v_1,v_2,\ldots,v_{i-2},v_{i+1},\ldots,v_d\}$
is another metric basis of $G$, respectively. These contradictions
imply that $y\nsim a$ and $y\sim b$. Hence,
$V(G)\setminus\{a,b,x,v_1,v_2,\ldots,v_{i-2},v_{i+1},\ldots,v_d\}$
is  a metric basis of $G$, which is also a contradiction.
Consequently, all vertices in $\Gamma_i$ are adjacent to all
vertices in $\Gamma_{i-1}$.

The above two facts imply that,   if $|\Gamma_i|\geq2$ and
$|\Gamma_{i+1}|\geq2$, then all vertices in $\Gamma_i$ have the
same neighbors in $\Gamma_{i-1}\cup\Gamma_i\cup\Gamma_{i+1}$.
Therefore,  all vertices $u,v\in\Gamma_i$ are twin vertices, which
 by Lemma~\ref{lem:notwin} this is impossible. Thus,
$|\Gamma_i|\geq2$ implies that $|\Gamma_{i+1}|=1$ and
$|\Gamma_{i-1}|=1$. Hence, if $|\Gamma_i|>2$, then since $\Gamma_{i+1}=\{v_{i+1}\}$,   by the
Pigenhole principle there are  two vertices $u,v\in \Gamma_i$ with
the same adjacency relation with $v_{i+1}$ .  Therefore, $u$
and $v$ are twin vertices, which is impossible.
 That is, for each $i,~1\leq i\leq d$,
$|\Gamma_i|\leq2$. Now let $j$ be the largest integer in
$\{1,2,\dots,d\}$ with $|\Gamma_j|=2$ and $\Gamma_j=\{v_j,y_j\}$,
where $y_j$ is the vertex with no neighbor in $\Gamma_{j+1}$.
Therefore, the sets $\{v_0,v_d\}$ and $\{v_0,y_j\}$ are two metric
bases of $G$. This contradiction implies that $\beta(G)\neq
n-d-1$.}\end{proof}

\begin{thm}\label{girth}
If $G$ is a uniquely dimensional graph of order $n$ and girth $g$, then $\beta(G)\leq n-g+1$.
\end{thm}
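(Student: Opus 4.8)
The plan is to mimic the opening move in the proof of Theorem~\ref{thm:k<n-d-2}: produce two \emph{distinct} resolving sets of size $n-g+2$, whence uniqueness of the metric basis forces $\beta(G)\leq n-g+1$. Let $(v_0,v_1,\dots,v_{g-1})$ be the vertices, listed in cyclic order, of a shortest cycle $C$ of $G$, so $|V(C)|=g$, and write $d_C$ for distance measured along $C$. First I would record that $C$ is isometric in $G$, i.e.\ $d(v_i,v_j)=d_C(v_i,v_j)$ for all $i,j$. Indeed, suppose not; then for some $i\neq j$ a geodesic $P$ of $G$ from $v_i$ to $v_j$ is shorter than an arc $A$ of $C$ from $v_i$ to $v_j$ of length $d_C(v_i,v_j)\leq\lfloor g/2\rfloor$, so $|E(P)|<|E(A)|$. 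Since $P$ and $A$ are distinct paths with the same endpoints, the symmetric difference $E(P)\triangle E(A)$ contains a cycle, of length at most $|E(P)|+|E(A)|<2\lfloor g/2\rfloor\leq g$, contradicting that the girth of $G$ is $g$. This is precisely the step at which the hypothesis has to read ``girth $g$'' and not merely ``$G$ contains a cycle of length $g$''.

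Next I would show that $S_1:=(V(G)\setminus V(C))\cup\{v_0,v_1\}$ is a resolving set for $G$. Let $x\neq y$ be vertices of $G$. If one of them, say $x$, lies in $S_1$, then the entry of $r(x|S_1)$ indexed by $x$ is $0$ whereas that of $r(y|S_1)$ is $d(y,x)\geq1$, so $r(x|S_1)\neq r(y|S_1)$. Hence the only pairs still needing to be resolved are pairs $v_i,v_j$ with $i,j\in\{2,\dots,g-1\}$, and for these it suffices that $\{v_0,v_1\}$ resolves $V(C)\setminus\{v_0,v_1\}$ inside $C$. This is the well-known fact that two adjacent vertices of a cycle form a resolving set of it --- for instance, $d_C(v_i,v_0)$ together with the sign of $d_C(v_i,v_1)-d_C(v_i,v_0)$ already determines $i$. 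By the isometry of the previous paragraph these representations computed inside $C$ coincide with those computed in $G$, so $S_1$ resolves $G$. Applying the same argument to the adjacent pair $\{v_1,v_2\}$ (equivalently, rotating the labels around $C$), the set $S_2:=(V(G)\setminus V(C))\cup\{v_1,v_2\}$ is a resolving set for $G$ as well.

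To conclude, $|S_1|=|S_2|=n-g+2$, and $S_1\neq S_2$ because $v_0\in S_1\setminus S_2$. If $\beta(G)=n-g+2$, then $S_1$ and $S_2$ would be two distinct metric bases of $G$, contradicting that $G$ is uniquely dimensional; hence $\beta(G)\leq n-g+1$.

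The one step carrying real content is the verification that $S_1$ resolves $G$, and inside it the sub-fact that two adjacent vertices of a cycle form a resolving set; the isometric-embedding statement is routine but must be stated with care, since it is exactly where the girth hypothesis is used, and it is also what prevents the result from being deduced from Theorem~\ref{thm:k<n-d-2} via a bound on the diameter (for large $g$ the estimate $n-g+1$ is strictly better than what the diameter bound yields).
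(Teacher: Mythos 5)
Your proof is correct and follows essentially the same route as the paper: the paper likewise takes the complement of all but two adjacent vertices of a shortest cycle, in two different ways, to get two distinct resolving sets of size $n-g+2$, and then invokes uniqueness of the basis. The only difference is that the paper asserts without justification that these sets are resolving, whereas you supply the (correct) verification via the isometry of a shortest cycle and the fact that two adjacent vertices resolve a cycle.
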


\begin{proof}{
Let  $C_g=(v_1,v_2,\ldots,v_g,v_1)$ be a shortest cycle in $G$.
Then $V(G)\setminus\{v_3,v_4,\ldots,v_g\}$ and
$V(G)\setminus\{v_2,v_3,\ldots,v_{g-1}\}$ are two resolving set
for $G$ of size $n-g+2$. Since $G$ has a unique basis, none of
these two sets is a metric basis of $G$. Therefore, $\beta(G)\leq
n-g+1$. }\end{proof}

\begin{thm}\label{thm:n>2k} If $G$ is a uniquely dimensional graph of order $n$,
then $\beta(G)< {n \over 2}$.
\end{thm}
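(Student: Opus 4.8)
The plan is to prove the sharp inequality $n\ge 2k+1$, where $k=\beta(G)$. Write $B$ for the unique metric basis and $m=n-k=|V(G)\setminus B|$, so the goal becomes $m\ge k+1$. The key leverage from uniqueness is this: for every $b\in B$ and every $v\in V(G)\setminus B$, the set $(B\setminus\{b\})\cup\{v\}$ has cardinality $k$ and is different from $B$, hence by uniqueness it is \emph{not} a resolving set. I will use this both to force collisions among non-basis vertices and to kill the obvious competing bases.

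\emph{Step 1: for each $b\in B$ produce two distinct vertices $a_b,c_b\in V(G)\setminus B$ with $r(a_b|B\setminus\{b\})=r(c_b|B\setminus\{b\})$.} Since $B$ is a metric basis, $B\setminus\{b\}$ is not resolving, so the equivalence relation ``$x\sim_b y$ iff $r(x|B\setminus\{b\})=r(y|B\setminus\{b\})$'' has a class of size at least two; such a class lies inside $(V(G)\setminus B)\cup\{b\}$, because each $w\in B\setminus\{b\}$ is alone in its class. If some nontrivial $\sim_b$-class contains at least two vertices of $V(G)\setminus B$, I pick those. Otherwise every nontrivial class has at most one vertex outside $B$, hence (being of size $\ge 2$) equals $\{b,v\}$ for some $v\in V(G)\setminus B$; and since only one class contains $b$, this is the unique nontrivial class. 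But then adding $v$ to $B\setminus\{b\}$ destroys all collisions: any unresolved pair of $(B\setminus\{b\})\cup\{v\}$ would be a $\sim_b$-pair inside $((V(G)\setminus B)\cup\{b\})\setminus\{v\}$, and the only such pair, $\{b,v\}$, contains $v$. So $(B\setminus\{b\})\cup\{v\}$ would resolve $G$, a second basis — contradiction. Hence the pair $\{a_b,c_b\}$ always exists.

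\emph{Step 2: the ``collision graph'' is a forest.} Let $H$ be the simple graph on vertex set $V(G)\setminus B$ with edge set $\{\,\{a_b,c_b\}:b\in B\,\}$. The map $b\mapsto\{a_b,c_b\}$ is injective: if $\{a_b,c_b\}=\{a_{b'},c_{b'}\}=\{x,y\}$ with $b\ne b'$, then $d(x,w)=d(y,w)$ for all $w\in(B\setminus\{b\})\cup(B\setminus\{b'\})=B$, so $r(x|B)=r(y|B)$ — impossible since $B$ resolves $G$. Thus $|E(H)|=k$. Now suppose $H$ contains a cycle $v_1v_2\cdots v_\ell v_1$. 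Each edge $v_iv_{i+1}$ equals $\{a_{b_i},c_{b_i}\}$ for a \emph{distinct} $b_i\in B$, so $d(v_i,w)=d(v_{i+1},w)$ for every $w\in B\setminus\{b_i\}$. Fix $w\in B$: at most one cycle edge carries the label $w$, and after removing it what remains is still connected and spans $\{v_1,\dots,v_\ell\}$; along it $d(\cdot,w)$ is constant, so $d(v_1,w)=\cdots=d(v_\ell,w)$. Since this holds for every $w\in B$, all $v_i$ have the same representation with respect to $B$ — impossible. Hence $H$ is a forest on $m$ vertices, so $k=|E(H)|\le m-1$, i.e. $n=m+k\ge 2k+1$, which is $\beta(G)<n/2$.

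The subtle point, and the one I expect to be the main obstacle, is the acyclicity in Step 2: knowing only that the $k$ collision pairs $\{a_b,c_b\}$ are distinct gives merely $\binom{m}{2}\ge k$, which is far too weak to conclude $m>k$; the correct observation is that these pairs cannot close up into a cycle. Once that is in hand, the remaining ingredients — the short case analysis producing the pairs (powered by ``$(B\setminus\{b\})\cup\{v\}$ is never resolving''), the injectivity, and the cycle-elimination step — are routine.
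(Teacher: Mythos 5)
Your proof is correct, and it takes a genuinely different route from the paper's. The paper argues directly from the assumption $n\le 2k$: it exhibits the single competing $k$-set $W=(V(G)\setminus B)\cup\{v_1,\dots,v_{2k-n}\}$, which by uniqueness cannot resolve $G$; the resulting collision must occur between two basis vertices $v_i,v_j$, which makes the coordinate $v_i$ redundant for resolving $V(G)\setminus B$, and then the unique vertex $u$ colliding with $v_i$ over $B\setminus\{v_i\}$ yields a second basis $(B\setminus\{v_i\})\cup\{u\}$ --- contradiction. You instead extract, for every $b\in B$, a collision pair $\{a_b,c_b\}$ lying inside $V(G)\setminus B$ (your Case B, where the only collision over $B\setminus\{b\}$ is $\{b,v\}$, is essentially the paper's final move producing a second basis, here used to rule that case out), and then show that the $k$ resulting pairs form a forest on the $n-k$ non-basis vertices, giving $k\le n-k-1$. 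Both arguments deliver the same sharp bound $n\ge 2k+1$. The paper's proof is shorter and uses only one alternative set; yours is more structural, and its two nonroutine ingredients --- the injectivity of $b\mapsto\{a_b,c_b\}$ (otherwise two non-basis vertices would share their representation over all of $B$) and the acyclicity of the collision graph (a cycle whose edges carry distinct labels $b_i$ would force all its vertices to have identical representations with respect to $B$) --- are both correctly justified. Note also that your Step 1 silently handles the degenerate case $|V(G)\setminus B|\le 1$, since there Case A is impossible and Case B already contradicts uniqueness, so the forest count is always available.
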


\begin{proof}{
By the  contrary assume that  $G$ has a unique metric basis
$B=\{v_1,v_2,\ldots,v_k\}$ and $n\leq 2k$.
 Since $k\leq n-1$, $W=(V(G)\setminus B)\cup\{v_1,v_2,\ldots, v_{2k-n}\}\neq
 B$ with $|W|=k$.
 Therefore,  $W$ is not a basis of $G$ and there exist vertices
$x,y\in V(G)\setminus W\subseteq B$ such that $r(x|W)=r(y|W)$.
Say $x=v_i$ and $y=v_j$. Hence, for each $v\in V(G)\setminus B$,
$d(v,v_i)=d(v,v_j)$. By this reason, $B\setminus\{v_i\}$ resolves
$V(G)\setminus B$. Therefore, there is exactly one vertex $u\in
V(G)\setminus B$ such that
$r(u|B\setminus\{v_i\})=r(v_i|B\setminus\{v_i\})$. Consequently,
$(B\setminus\{v_i\})\cup\{u\}$ is a metric basis of $G$, which is
a contradiction. Thus, $2\beta(G)< n$.
}\end{proof}

\section{ Construction of uniquely $k$-dimensional graphs}\label{recursive}
In this section, we provide some  construction for uniquely
$k$-dimensional graphs of given order. Then we end with giving a
lower bound and an upper bound for the minimum number of vertices
in such graphs in terms of $k$.

\begin{rem}\label{remark}
Note that, if $G$ is a graph of
diameter $d$, then every $W\subseteq V(G)$ can resolve at most $d^{|W|}$ vertices of $V(G)\setminus W$.
Hence, every $k$-dimensional graph of diameter $d$ has at most $k+d^k$ vertices.
\end{rem}

In~\cite{unique}, Buczkowski et al. constructed a uniquely
$k$-dimensional graph   with diameter $2$ and order $k+2^k$.

\begin{lem}~{\rm\cite{unique}}\label{kunique}
For $k\ge 2$, there exists a uniquely $k$-dimensional graph of
order $n=k+2^k$,  diameter $2$,  and maximum degree $n-1$.
\end{lem}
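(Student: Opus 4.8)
The plan is to produce an explicit graph $G_k$ and verify the four required properties directly; this essentially recovers the construction of~\cite{unique}. For $k\ge 2$ put $[k]=\{1,\dots,k\}$ and let $G_k$ have vertex set $W\cup U$, where $W=\{w_1,\dots,w_k\}$ and $U=\{u_S : S\subseteq[k]\}$, subject to the rules: $W$ is a clique, $U$ is a clique, and $w_i\sim u_S$ if and only if $i\in S$. Then $n:=|V(G_k)|=k+2^k$, and $u_{[k]}$ is adjacent to every other vertex, so $\Delta(G_k)=n-1$. A one-line distance computation gives $d(w_i,w_j)=1$, $d(u_S,u_T)=1$, $d(w_i,u_S)=1$ when $i\in S$, and $d(w_i,u_S)=2$ otherwise (use the path $w_i$–$u_{\{i\}}$–$u_S$); since $w_1$ and $u_\emptyset$ are nonadjacent, the diameter is exactly $2$.

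Next I would determine $\beta(G_k)$. Ordering $W$ as $(w_1,\dots,w_k)$, one sees that $r(w_i\mid W)$ has a $0$ in coordinate $i$ and a $1$ in every other coordinate, while $r(u_S\mid W)$ has a $1$ in the coordinates lying in $S$ and a $2$ in the coordinates outside $S$; these $n$ vectors are pairwise distinct, so $W$ is a resolving set and $\beta(G_k)\le k$. For the reverse inequality I would apply Remark~\ref{remark}: a graph of diameter $2$ and metric dimension $\beta(G_k)$ has at most $\beta(G_k)+2^{\beta(G_k)}$ vertices, and since $x\mapsto x+2^x$ is strictly increasing while $n=k+2^k$, this forces $\beta(G_k)\ge k$. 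Hence $\beta(G_k)=k$.

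The heart of the argument is uniqueness. Let $B$ be a metric basis, so $|B|=k$. Each vertex outside $B$ has a representation in $\{1,2\}^k$ (its coordinates are distances that are $\ge 1$ and $\le 2$), these representations are pairwise distinct, and $|V(G_k)\setminus B|=2^k=|\{1,2\}^k|$; hence $r(\,\cdot\mid B)$ is a bijection of $V(G_k)\setminus B$ onto $\{1,2\}^k$. In particular there is a vertex $z\notin B$ with $r(z\mid B)=(2,\dots,2)$, i.e.\ $z$ is nonadjacent to every element of $B$. If $z=u_S$, then since $u_S$ is adjacent to all the other vertices of $U$, we get $B\cap U=\emptyset$, so $B\subseteq W$ and therefore $B=W$. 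If instead $z=w_i$, then since $w_i$ is adjacent to all the other vertices of $W$ and $w_i\notin B$, we get $B\subseteq U$; but then the $2^k-k\ge 2$ vertices of $U\setminus B$ all have representation $(1,\dots,1)$ (as $U$ is a clique), contradicting that $B$ resolves $G_k$. Thus $B=W$, and $G_k$ is uniquely $k$-dimensional.

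I expect the only real difficulty to be choosing the construction so that this last step goes through; the distance and counting bookkeeping is routine. The key design choices are that both $W$ and $U$ are cliques: that is exactly what makes the ``at distance $2$ from all of $B$'' vertex $z$ push $B$ entirely into $W$ or entirely into $U$, and it also excludes the alternative $B\subseteq U$ for free, since a clique on $2^k$ vertices is never resolved by $k$ of its own members once $2^k-k\ge 2$. A less symmetric graph of order $k+2^k$ and diameter $2$ can easily fail to be uniquely dimensional, so the substance of the proof is checking that this particular adjacency pattern admits no second basis.
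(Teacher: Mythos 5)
Your construction and verification are correct. Note that the paper itself does not prove this statement: it is quoted as a known result of Buczkowski, Chartrand, Poisson and Zhang, so there is no in-paper proof to compare against. Your graph (a clique $W$ on $k$ vertices, a clique $U$ indexed by subsets of $[k]$, with $w_i\sim u_S$ iff $i\in S$) is a legitimate self-contained witness: the order, maximum degree, and diameter checks are right; the lower bound $\beta\ge k$ via Remark~\ref{remark} is exactly the counting argument the paper itself uses in its analogous diameter-$3$ construction (Theorem~\ref{pro:n=k+3^k}); and the uniqueness argument is clean --- the bijection of $V\setminus B$ onto $\{1,2\}^k$ forces a vertex $z$ at distance $2$ from all of $B$, and the two clique structures then pin $B$ down to $W$ (the subcase $B\subseteq U$ correctly dies because a clique of size $2^k$ cannot be resolved by $k<2^k-1$ of its own vertices). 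The only cosmetic remark is that where the paper's later constructions exclude non-basis vertices via the quantitative criterion of Theorem~\ref{lem:Gammai in k+d^k}(ii), you instead use the ``all-$2$'' representation vector; both are valid, and yours is arguably more elementary.
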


In the following theorem regarding to constructing uniquely
$k$-dimensional graphs with diameter $d$,  we obtain two necessary
conditions for the existence of  $k$-dimensional graphs with
diameter $d$ and  order $k+d^k$.

\begin{thm}\label{lem:Gammai in k+d^k}
If $G$ is a  $k$-dimensional graph with diameter $d$ and
order $k+d^k$,  then \\
{\rm (i)} $d\leq 3$.\\
 {\rm (ii)}  For a  basis $B$ and  every $v\in B$,
 $|\Gamma_d(v)|\geq d^{k-1}$.
\end{thm}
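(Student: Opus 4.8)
The key structural fact I would exploit is Remark~\ref{remark}: since $G$ has diameter $d$ and $|V(G)\setminus B| = d^k$ exactly, the map $v \mapsto r(v\,|\,B)$ from $V(G)\setminus B$ to $\{1,2,\ldots,d\}^k$ must be a \emph{bijection}. (The coordinates cannot be $0$ since $B$ is a metric basis, so every entry lies in $\{1,\ldots,d\}$; surjectivity forces bijectivity by counting.) Every combinatorial statement will be read off from this fact. In particular, writing $B=\{w_1,\ldots,w_k\}$, for each $w_i\in B$ and each $j\in\{1,\ldots,d\}$ the set of vertices of $V(G)\setminus B$ at distance $j$ from $w_i$ has size exactly $d^{k-1}$; this is essentially part (ii) already, once I check the contribution of $B$ itself to $\Gamma_d(w_i)$ is zero or handled separately.

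For part (ii): fix $w_i\in B$ and $j\in\{1,\ldots,d\}$. By the bijection, exactly $d^{k-1}$ vertices of $V(G)\setminus B$ have $i$-th coordinate equal to $j$, i.e.\ $|\Gamma_j(w_i)\setminus B| = d^{k-1}$. Taking $j=d$ gives $|\Gamma_d(w_i)| \ge |\Gamma_d(w_i)\setminus B| = d^{k-1}$, which is exactly the claim. The only subtlety is to be sure we are allowed to ignore the vertices of $B$ in $\Gamma_d(w_i)$ (they only help the inequality), and that $d^{k-1}$ is an integer bound even when $d=1$ (trivial case) — so no real obstacle here.

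For part (i): this is where the work lies. Suppose $d\ge 4$ and take a diametral path $(v_0,v_1,\ldots,v_d)$. Look at the two vertices $v_{d-1}$ and some vertex $u$ realizing a chosen representation. The idea is to derive a contradiction from the bijectivity by a counting/pigeonhole argument on the internal vertices of a diametral geodesic: a vertex $x$ of $V(G)\setminus B$ lying on (or near) the path has its representation constrained by the triangle inequality relative to the endpoints, and when $d$ is large these constraints cut down the number of \emph{achievable} vectors below $d^k$, contradicting surjectivity. Concretely, I would fix $w_i\in B$ and examine the consecutive "shells" $\Gamma_1(w_i),\ldots,\Gamma_d(w_i)$ restricted to $V(G)\setminus B$, each of size $d^{k-1}$; adjacency forces $|d(x,w_i)-d(y,w_i)|\le 1$ for $x\sim y$, so one can track how the other $k-1$ coordinates can vary as the $i$-th coordinate sweeps from $1$ to $d$, and show that some target vector in $\{1,\ldots,d\}^k$ cannot be hit unless $d\le 3$.

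The main obstacle I anticipate is making the part-(i) counting argument airtight: naively the triangle inequality alone does not immediately kill $d^k$ possibilities, so one needs to combine it with Lemma~\ref{thm:k<n-d-2}-style observations (here $G$ need only be $k$-dimensional, not uniquely so, so that lemma does not apply directly) or with a more careful analysis of how many vertices can be pairwise "far" in two independent coordinates. I would therefore expect the argument for $d\le 3$ to proceed by assuming $d\ge 4$, picking two coordinates $i,i'$, and showing the number of vertices with prescribed large values in both coordinates is forced to be strictly less than $d^{k-2}$, contradicting the exact bijection. Verifying that this strict deficit really occurs for $d\ge 4$ but not for $d\le 3$ is the crux, and is where a short but clever case analysis on small diametral configurations will be needed.
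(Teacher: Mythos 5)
Your part (ii) is correct and is exactly the paper's argument: injectivity of $v\mapsto r(v\,|\,B)$ on the $d^k$ vertices of $V(G)\setminus B$, together with the fact that every coordinate lies in $\{1,\ldots,d\}$, forces this map to be a bijection onto $[d]^k$, and reading off the vertices whose $i$-th coordinate equals $d$ gives $|\Gamma_d(v_i)|\ge d^{k-1}$. (One small slip: it is injectivity plus the cardinality count that forces bijectivity, not surjectivity; surjectivity is the conclusion, not the hypothesis.)

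Part (i), however, is a genuine gap. You have the right launching point (the representation map is a bijection onto $[d]^k$), but you explicitly defer the contradiction to an unexecuted two-coordinate counting argument, anticipating that you must show some class of vectors with "prescribed large values in both coordinates" has size strictly less than $d^{k-2}$. No such counting is needed, and I see no reason your proposed deficit estimate would actually materialize from the triangle inequality alone. The argument is much shorter and uses only two specific vectors guaranteed by surjectivity. Since the map is onto $[d]^k$ and $d\ge 4$, there is a vertex $u_1$ with $r(u_1|B)=(1,1,\ldots,1)$ and a vertex $u_2$ with $r(u_2|B)=(4,1,\ldots,1)$. The first gives $d(v_1,v_2)\le d(v_1,u_1)+d(u_1,v_2)=2$; the second then gives $d(u_2,v_1)\le d(u_2,v_2)+d(v_2,v_1)\le 1+2=3$, contradicting $d(u_2,v_1)=4$. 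So the crux you flagged as the hard part is resolved by a single application of the triangle inequality through the all-ones vertex, with no diametral path, no shell analysis, and no case analysis on small configurations. (Note this uses $k\ge 2$ so that a second basis vertex $v_2$ exists; that hypothesis is implicit throughout the paper.)
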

\begin{proof}{
(i) Let $G$ be a $k$-dimensional graph of  diameter $d \geq 4$
and  order $k+d^k$. Thus,   $V(G)=U\cup B$, where
$U=\{u_1,u_2,\ldots,u_{d^k}\}$ and the ordered set
$B=\{v_1,v_2,\ldots,v_{k}\}$ is a basis of~$G$. Clearly,
$\{r(u_i|B)\ | \ 1\le i\le d^k\}=[d]^k$, where $[d]^k$ denotes the
set of all $k$-tuples with entries in $\{1,2,\ldots,d\}$. Without
loss of generality, suppose that $r(u_1|B)=(1,1,\ldots,1)$ and
$r(u_2|B)=(4,1,\ldots,1)$. Therefore, $d(v_1,v_2)\leq 2$ and
$d(u_2,v_1)\leq d(u_2,v_2)+d(v_2,v_1)\leq 3$, a contradiction.
Thus, $d\leq 3$.

\noindent (ii) Let $B=\{v_1,v_2,\ldots,v_k\}$. By the order and
diameter of $G$, each $k$-vector with coordinates in
$\{1,2,\ldots,d\}$ is the metric representation of a vertex $u\in
V(G)\setminus B$ with respect to $B$.
 Therefore, for each $v\in B$,
there are $d^{k-1}$ vertices of $G$ that the $i$-th coordinate of
their metric representations is $d$. Thus, $|\Gamma_d(v)|\geq
d^{k-1}$.
}\end{proof}

 In the following, we give a construction
for uniquely $k$-dimensional graphs of diameter $3$ and order
$k+3^k$.

\begin{thm}\label{pro:n=k+3^k} For every  integer $k\geq2$, there exists
a uniquely $k$-dimensional graph of diameter $3$ and  order
$k+3^k$.
\end{thm}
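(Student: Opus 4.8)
\emph{Proposed approach.} Remark~\ref{remark} shows that the bound $n\le k+d^{k}$ is met only when $u\mapsto r(u\mid B)$ is a bijection from $V(G)\setminus B$ onto $\{1,2,3\}^{k}$, and Theorem~\ref{lem:Gammai in k+d^k}(ii) then forces $|\Gamma_{3}(v)|=3^{k-1}$ for every basis vertex; together with the $d=2$ construction of \cite{unique} (Lemma~\ref{kunique}) this dictates the shape of the example. The plan is to let $V(G)=B\cup U$ with $B=\{v_{1},\dots,v_{k}\}$ and $U=\{u_{\mathbf a}:\mathbf a\in\{1,2,3\}^{k}\}$, and take as edges: $v_{i}v_{j}\notin E$ for $i\neq j$; $v_{i}u_{\mathbf a}\in E$ iff $a_{i}=1$; and $u_{\mathbf a}u_{\mathbf b}\in E$ iff $\mathbf a\neq\mathbf b$ and $|a_{i}-b_{i}|\le 1$ for all $i$. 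Thus $U$ induces the $k$-fold strong product of the path on $\{1,2,3\}$, and $v_{i}$ is joined precisely to the layer $\{u_{\mathbf a}:a_{i}=1\}$.

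First I would nail down all distances. Inside $U$, since moving along one edge changes each coordinate by at most $1$ (hence changes $\max_{i}|a_{i}-c_{i}|$ by at most $1$ for fixed $\mathbf c$), while any two points at $\ell_{\infty}$-distance $\le 2$ lie at graph-distance $\le 2$ (via a coordinatewise midpoint), one gets $d(u_{\mathbf a},u_{\mathbf b})=\max_{i}|a_{i}-b_{i}|\in\{0,1,2\}$; the attached vertices create no shortcut because $N(v_{i})\subseteq U$ and no two $v_{i}$ are adjacent. Then $d(v_{i},v_{j})=2$ (common neighbour: any $u_{\mathbf a}$ with $a_{i}=a_{j}=1$) and $d(v_{i},u_{\mathbf a})=a_{i}$: this is $1$ iff $a_{i}=1$; if $a_{i}=2$, switching coordinate $i$ of $\mathbf a$ to $1$ gives a common neighbour; if $a_{i}=3$ there is no common neighbour (a neighbour of $v_{i}$ has $i$-th coordinate $1$, hence $\ell_{\infty}$-distance $\ge 2$ from $u_{\mathbf a}$) but there is a path of length $3$. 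Hence $\mathrm{diam}(G)=3$, $r(u_{\mathbf a}\mid B)=\mathbf a$ ranges over all of $\{1,2,3\}^{k}$, and $r(v_{i}\mid B)$ is the vector with a $0$ in coordinate $i$ and $2$ elsewhere; so $B$ resolves $G$, $\beta(G)\le k$, and $|V(G)|=k+3^{k}$.

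The hard part will be showing $B$ is the \emph{only} resolving set of size $\le k$, which simultaneously yields $\beta(G)=k$ and uniqueness. I would argue by a counting estimate. Let $W$ resolve $G$ with $|W|\le k$; write $W=W_{B}\cup W_{U}$ with $W_{B}\subseteq B$, $W_{U}\subseteq U$, and set $I=\{i:v_{i}\in W_{B}\}$, $s=|I|$, $t=|W_{U}|$, so $s+t\le k$. Since two vertices $u_{\mathbf a},u_{\mathbf b}$ that agree on the coordinates in $I$ have equal distance to every vertex of $W_{B}$, the set $W_{U}$ must separate each ``fibre'' $F_{\mathbf x}=\{u_{\mathbf a}:\mathbf a|_{I}=\mathbf x\}$, which has $3^{k-s}$ elements. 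But if $u_{\mathbf a}\in F_{\mathbf x}$ and $u_{\mathbf a}\notin W_{U}$, every coordinate of $r(u_{\mathbf a}\mid W_{U})$ equals $d(u_{\mathbf a},u_{\mathbf c})=\max_{i}|a_{i}-c_{i}|$ for some $u_{\mathbf c}\in W_{U}$ with $\mathbf c\neq\mathbf a$, hence lies in $\{1,2\}$; so $F_{\mathbf x}\setminus W_{U}$ injects into $\{1,2\}^{t}$, giving $3^{k-s}-t\le 2^{t}$. With $m:=k-s\ge t$ this forces $3^{m}\le 2^{m}+m$, which fails for $m\ge 2$. If $m=1$ then $s=k-1$ and (since $W_{B}$ cannot separate a $3$-element fibre) $t=1$; but then, choosing one of the $3^{k-1}\ge 3$ fibres that avoids the single vertex of $W_{U}$, a $3$-element set would have to inject into $\{1,2\}$, which is impossible. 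Hence $m=0$, so $t=0$ and $W_{B}=B$, i.e. $W=B$. In filling this in, the only genuine work is the fibre bookkeeping and the two boundary cases $m=0,1$; the construction and the distance computations above are routine verifications.
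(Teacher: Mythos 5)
Your proof is correct, and both the graph and the uniqueness argument differ from the paper's. The paper also puts an independent set of $k$ attachment vertices next to a copy of $\{1,2,3\}^k$ with $v_i$ joined to the tuples whose $i$-th coordinate is $1$, but inside the cube it uses the sparser adjacency ``differ in exactly one coordinate, by one'' and then adds $(2,2,\ldots,2)$ as a universal vertex of the cube part to force $\mathrm{diam}\langle W\rangle=2$; your strong-product adjacency achieves the same diameter-$2$ property intrinsically, and in both graphs $r(w\mid B)=w$. The real divergence is in uniqueness: the paper gets $\beta(G)\ge k$ for free from Remark~\ref{remark} (order $=k+d^k$ with $d=3$) and then excludes every cube vertex from every basis by Theorem~\ref{lem:Gammai in k+d^k}(ii), since $|\Gamma_3(w)|\le k<3^{k-1}$ — three lines, but dependent on those two auxiliary results. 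Your fibre-counting argument ($3^{k-s}-t\le 2^t$ with $m=k-s\ge t$, then disposing of $m\ge 2$, $m=1$, $m=0$) is self-contained, proves $\beta(G)\ge k$ and uniqueness in one stroke, and would apply verbatim to the paper's graph as well; the price is the extra bookkeeping. Both routes are sound; yours trades brevity for independence from Theorem~\ref{lem:Gammai in k+d^k}.
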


\begin{proof}{Let $G$ be a graph with vertex set $U\cup W$, where
$U=\{u_1,u_2,\ldots,u_k\}$ is an independent set and $W$ is the
set of all $k$-tuples with entries in $\{1,2,3\}$ and  two
vertices $x,y\in W$ are adjacent if they are different in exactly
one coordinate and this difference is one. Moreover, the vertex
 $(2,2,\ldots,2)$ is adjacent
to all vertices in $W$. Also, $w\in W$ is adjacent to $u_i\in U$
if the $i$-th coordinate of $w$ is $1$.

 The vertex $(2,2,\ldots,2)$ is adjacent to all
vertices in $W$ and $(1,1,\ldots,1)$ is adjacent to all vertices
in $U$, thus ${\rm diam}(G)\leq 3$. On the other hand,
$d((3,3,\ldots,3),u_1)=3$. Therefore, ${\rm diam}(G)=3$. Since
${\rm diam}(G)=3$ and the order of $G$ is $k+3^k$, by
Remark~\ref{remark}, $\beta(G)\ge k$. For each $w\in W$,
$r(w|U)=w$, thus, $U$ is a resolving set for $G$ of size $k$.
Hence, $U$ is a metric basis of $G$.

 Now since
${\rm diam}(\langle W\rangle)=2$,  for each $w\in W$,
$|\Gamma_1(w)\cup\Gamma_2(w)|\geq 3^k-1$ and hence
$|\Gamma_3(w)|\leq k<3^{k-1}$. Therefore, by
Theorem~\ref{lem:Gammai in k+d^k}(ii), no vertex of $W$ is in a
metric basis of $G$. Consequently, $U$ is the unique metric basis
of $G$.}\end{proof}

 By Theorems~\ref{thm:k<n-d-2} and~\ref{thm:n>2k}, if
$G$ is a uniquely $k$-dimensional graph of order $n$, then $n\ge
k+d+2$ and $n\ge 2k+1$. Let  $$n_0(k)=\min\{n\ |\ \mbox{ there
exists a uniquely $k$-dimensional graph of order $n$}\}.$$ Hence,
we have $\max\{2k+1, k+d+2\} \le n_0(k)$.

 The following theorem shows that if a uniquely
$k$-dimensional graph of order $n_0$ exists, then for every
$n\geq n_0$, a uniquely $k$-dimensional graph of order $n$ exists.

\begin{thm}
If $G$ is a uniquely $k$-dimensional graph of order $n_0$, then
for every $n\ge n_0$, there exists a uniquely  $k$-dimensional
graph of order $n$.
\end{thm}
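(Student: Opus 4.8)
The plan is to proceed by induction on $n$. It clearly suffices to prove the following single step: from any uniquely $k$-dimensional graph $H$ of order $m$ one can construct a uniquely $k$-dimensional graph $H'$ of order $m+1$ having the same metric basis as $H$. Starting from $G$ (with $m=n_0$) and iterating this step as many times as needed then yields a uniquely $k$-dimensional graph of every order $n\ge n_0$.

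To carry out the step, let $B=\{w_1,\dots,w_k\}$ be the unique metric basis of $H$; since $k=\beta(H)<m$ (for instance by Theorem~\ref{thm:n>2k}) there is a vertex outside $B$, and among all $v\in V(H)\setminus B$ I would pick one for which the coordinate sum $\sum_{i=1}^{k}d(v,w_i)$ of $r(v|B)$ is largest. Let $H'$ be obtained from $H$ by adding a new vertex $u$ whose only neighbor is $v$. Adding a pendant vertex creates no new shortest paths among old vertices, so $d_{H'}$ agrees with $d_H$ on $V(H)$, and $d_{H'}(u,x)=d_H(v,x)+1$ for every $x\in V(H)$; hence $r_{H'}(u|B)=r_H(v|B)+(1,\dots,1)$. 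By the maximality of $v$ this vector has strictly larger coordinate sum than $r_H(x|B)$ for every $x\in V(H)\setminus B$, and it has all coordinates at least $2$, so it also differs from each $r_H(w_i|B)$ (which has a zero $i$-th coordinate); thus no vertex of $H$ shares the representation of $u$. Together with the injectivity of $r_H(\cdot|B)$ this shows $B$ resolves $H'$, so $\beta(H')\le k$.

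Next I would verify $\beta(H')=k$ and, crucially, that $B$ is still unique. For the dimension: if $W'$ resolves $H'$ and $u\notin W'$, then $W'\subseteq V(H)$ and $W'$ already resolves $H$; if $u\in W'$, then $W''\cup\{v\}$ resolves $H$, where $W''=W'\setminus\{u\}$, using $d_{H'}(x,u)=d_H(x,v)+1$. Either way $|W'|\ge k$, so $\beta(H')=k$. For uniqueness, let $B'$ be any metric basis of $H'$. If $u\notin B'$ then $B'\subseteq V(H)$ resolves $H$, so $B'=B$. If $u\in B'$, write $B'=B''\cup\{u\}$ with $B''\subseteq V(H)$; as above $B''\cup\{v\}$ resolves $H$, and since $|B''|=k-1=\beta(H)-1$ the vertex $v$ is not in $B''$, so $B''\cup\{v\}$ is a metric basis of $H$ and must equal $B$ --- contradicting $v\notin B$. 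Hence $B$ is the unique metric basis of $H'$.

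The delicate point is this last uniqueness argument: it is exactly what forces the pendant to be attached at a vertex \emph{not} in $B$ (attaching it at some $w_i$ would let one swap $w_i$ for $u$ and obtain a second basis), and the choice of $v$ with maximal coordinate sum of $r(v|B)$ is precisely what guarantees that $B$ itself keeps resolving the enlarged graph. Everything else is routine bookkeeping with distances.
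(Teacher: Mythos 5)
Your proof is correct and takes essentially the same approach as the paper: the paper attaches a whole pendant path of length $n-n_0$ at a vertex $v_0\notin B$ chosen to maximize $d(v_0,u)$ for one fixed $u\in B$, while you add one pendant at a time at a vertex maximizing the coordinate sum of $r(v|B)$ --- two extremality choices serving the identical purpose of keeping $B$ resolving. Your uniqueness step (a basis of the enlarged graph contains at most the new vertex, which can be swapped for its attachment point to yield a basis of the old graph) is exactly the paper's argument, just written out in more detail.
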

\begin{proof}{
Let $G$ be a given uniquely $k$-dimensional graph of order $n_0$
and $u$ be a vertex in the basis $B$. Assume that $v_0\in
V(G)\setminus B$ is a vertex that $d(v_0,u)=\max \{d(v,u)\ |\
v\in V(G)\setminus B\}$. We construct a graph $G'$ by identifying
an end vertex of a path $P$ of length $n-n_0$ by $v_0$. By the
property of $v_0$, $B$ is also a resolving set for $G'$. Thus,
$\beta(G')\le k$. On the other hand, since every basis of $G'$
contains at most one vertex of the path $P$,  by replacing that
vertex by $v_0$, we obtain a basis for  $G$. Thus, $G'$ is also a
uniquely $k$-dimensional graph.
}\end{proof}

In the following theorem we give  a recursive construction for
uniquely dimensional graphs to obtain an upper bound for $n_0(G)$.

\begin{thm}\label{pro:recursive}
If $G_i$, $i=1,2$, is a uniquely $k_i$-dimensional graph of order
$n_i$ with $\Delta(G_i)=n_i-1$, then there exists a uniquely
$(k_1+k_2)$-dimensional graph $G$ of order $n_1+n_2-1$ with
$\Delta(G)=n_1+n_2-2$.
\end{thm}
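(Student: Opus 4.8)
The plan is to build $G$ by identifying a maximum-degree vertex of $G_1$ with a maximum-degree vertex of $G_2$, and then verify that the union of the two bases is the unique metric basis of $G$. Let $B_i$ be the unique metric basis of $G_i$, and let $x_i \in V(G_i)$ be a vertex with $\deg_{G_i}(x_i) = n_i - 1$, so $x_i$ is adjacent to every other vertex of $G_i$; in particular $\operatorname{diam}(G_i) \le 2$. First I would note that such a dominating vertex $x_i$ cannot lie in $B_i$: if $x_i \in B_i$, then since $G_i$ has no twin vertices (Lemma~\ref{lem:notwin}) one checks that $(B_i \setminus \{x_i\}) \cup \{y\}$ is another basis for a suitable $y$, contradicting uniqueness — more carefully, $x_i$ being universal means $d(x_i, \cdot) \equiv 1$ off $x_i$, so $x_i$ contributes nothing to resolving $V(G_i) \setminus \{x_i\}$, forcing $B_i \setminus \{x_i\}$ to resolve all but one pair and yielding an alternate basis exactly as in the proof of Lemma~\ref{lem:notwin}. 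So $x_i \in V(G_i) \setminus B_i$. Now form $G$ by taking the disjoint union of $G_1$ and $G_2$ and identifying $x_1$ with $x_2$ into a single vertex $x$; then $|V(G)| = n_1 + n_2 - 1$, and $\deg_G(x) = (n_1 - 1) + (n_2 - 1) = n_1 + n_2 - 2 = |V(G)| - 1$, so $\Delta(G) = n_1 + n_2 - 2$ as required, and $\operatorname{diam}(G) \le 2$.

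The next step is to compute distances in $G$. Since $x$ is universal in $G$, for any two vertices $a, b$ in the same part $V(G_i)$ we have $d_G(a,b) = d_{G_i}(a,b)$ (a shortest path either stays inside $G_i$ or routes through $x$, and routing through $x$ gives length $2 \ge d_{G_i}(a,b)$ when the internal distance is $1$, and internal distances are at most $2$); and for $a \in V(G_1)\setminus\{x\}$, $b \in V(G_2)\setminus\{x\}$ we have $d_G(a,b) = d_G(a,x) + d_G(x,b)$, which is $1$ if $a,b$ are both neighbors of $x$ (always true since $x$ is universal) — so in fact $d_G(a,b) = 2$ unless one of them is $x$. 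Wait: every vertex is adjacent to $x$, so actually $d_G(a,b) \in \{1,2\}$ always, and for $a \in V(G_1)$, $b \in V(G_2)$ with both $\ne x$ we get $d_G(a,b)=2$ since they are non-adjacent. The upshot: restricted to either $V(G_i)$, the metric on $G$ agrees with the metric on $G_i$, which is the fact I need. It follows that $B := B_1 \cup B_2$ resolves $G$: given distinct $u, v \in V(G)$, if both lie in $V(G_i)$ they are separated by some $w \in B_i$ since $B_i$ resolves $G_i$; if $u \in V(G_1)\setminus\{x\}$ and $v \in V(G_2)\setminus\{x\}$, then any $w \in B_1$ has $d_G(v,w) = 2$ while $d_G(u,w) \in \{1,2\}$, and if $d_G(u,w)=2$ for all $w\in B_1$ then we separate $u$ from $v$ using $B_2$ instead — indeed $v$ is at distance $2$ from every vertex of $B_1$ only if $v \ne x$, and $u$ is resolved from $x$ within $G_1$, so some coordinate differs. (Here I am using $x \notin B_i$.) Hence $\beta(G) \le k_1 + k_2$.

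For the lower bound and uniqueness I would argue that any resolving set $W$ of $G$ must restrict to a resolving set of each $G_i$ once we adjoin $x$. Fix a resolving set $W$ of $G$ of size $k_1+k_2$. Its restriction $W \cap V(G_i)$, together with $x$, resolves $V(G_i)$: two vertices $a,b \in V(G_i)\setminus\{x\}$ distinguished by some $w \in W \cap V(G_{3-i})$ have $d_G(a,w)=d_G(b,w)=2$, a contradiction, so they must be distinguished within $V(G_i)$, i.e. by $(W\cap V(G_i)) \cup \{x\}$. Thus $|W \cap V(G_i)| + 1 \ge \beta(G_i) = k_i$ in general, but we can do better: since $x$ is universal and hence useless for resolving $V(G_i)\setminus\{x\}$ (all such vertices are at distance $1$ from $x$), $W \cap V(G_i)$ alone must resolve all of $V(G_i)\setminus\{x\}$ except possibly leave $x$ unresolved from one vertex — combining with the no-twin / uniqueness structure of $G_i$ exactly as before, this forces $|W \cap V(G_i)| \ge k_i$ and, in the equality case $|W \cap V(G_i)| = k_i$, forces $W \cap V(G_i) = B_i$. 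Summing, $k_1 + k_2 = |W| = |W\cap V(G_1)| + |W \cap V(G_2)| \ge k_1 + k_2$ with equality throughout, so $W \cap V(G_i) = B_i$ for both $i$, i.e. $W = B_1 \cup B_2 = B$. Therefore $\beta(G) = k_1+k_2$ and $B$ is the unique metric basis, so $G$ is uniquely $(k_1+k_2)$-dimensional with $\Delta(G) = n_1+n_2-2$.

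I expect the main obstacle to be the clean justification, in the lower-bound half, that $|W \cap V(G_i)| = k_i$ forces $W \cap V(G_i) = B_i$ rather than merely $|W \cap V(G_i)| \ge k_i$; this is where the universality of $x_i$ (so that $x_i$ never helps resolve its own part) and the uniqueness of $B_i$ in $G_i$ must be leveraged together, essentially re-running the replacement argument of Lemma~\ref{lem:notwin}. The distance computations in $G$ are routine given that the identified vertex is universal, and the bound $\operatorname{diam}(G) \le 2$ makes all representations have entries in $\{1,2\}$, which keeps the case analysis short.
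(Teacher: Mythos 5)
There is a genuine gap, and it lies in the construction itself rather than in the verification details. You build $G$ by taking the \emph{disjoint union} of $G_1$ and $G_2$ and identifying the two universal vertices, so that every cross pair $a\in V(G_1)\setminus\{x\}$, $b\in V(G_2)\setminus\{x\}$ is at distance $2$. The paper instead takes the \emph{join} of $G_1$ and $G_2$ (all cross edges present) before identifying $v_1$ with $v_2$, so that all cross distances equal $1$. This difference is fatal to your claim that $B=B_1\cup B_2$ resolves $G$. In your graph, if $u\in V(G_1)\setminus\{x\}$ satisfies $r_{G_1}(u|B_1)=(2,\ldots,2)$ and $v\in V(G_2)\setminus\{x\}$ satisfies $r_{G_2}(v|B_2)=(2,\ldots,2)$, then $r_G(u|B)=r_G(v|B)=(2,\ldots,2)$, since every coordinate coming from the opposite part is automatically $2$. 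Such vertices exist in exactly the graphs this theorem is applied to: the uniquely $2$-dimensional graph of order $6$ from Theorem~\ref{kunique} has diameter $2$ and order $k+2^k$, so all four vectors in $\{1,2\}^2$, including $(2,2)$, occur as representations. Taking $G_1=G_2$ to be that graph, your $G$ has two vertices with identical representations with respect to $B_1\cup B_2$, so $B_1\cup B_2$ is not a resolving set; one can push the argument further to see that no $(k_1+k_2)$-subset resolves your $G$ at all, so the construction does not merely lack a proof --- it fails. The sentence in your upper-bound paragraph meant to handle this case (``we separate $u$ from $v$ using $B_2$ instead \dots\ so some coordinate differs'') does not actually produce a differing coordinate.

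The join is precisely what repairs this. With all cross distances equal to $1$, a cross collision would force $r_{G_1}(u|B_1)=(1,\ldots,1)=r_{G_1}(v_0|B_1)$, whence $u=v_0$ because $B_1$ resolves $G_1$; so the only representation that could be duplicated across the two parts is already claimed by the identified universal vertex. Your remaining ingredients are sound and match the paper's: the observation that a universal vertex cannot lie in the unique basis (via the replacement argument of Lemma~\ref{lem:notwin}), the fact that a vertex of one part cannot separate two vertices of the other part because its distances to them are constant, and the resulting forcing $W\cap V(G_i)=B_i$ by uniqueness of $B_i$. All of that survives verbatim once the cross edges are added; only the existence half of the argument needs the join.
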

\begin{proof}{
 Let $G_i$  be a uniquely $k_i$-dimensional graph of order
$n_i$  with the  basis $B_i$  and  $v_i\in V(G_i)$ such that
$\deg(v_i)=n_i-1$, for $i=1,2$. Let $G$ be a graph that obtained
from joining $G_1$ and $G_2$, and then identifying $v_1$ and
$v_2$,  say $v_0$. Thus, $\deg(v_0)=n_1+n_2-2$.
 Since for every $u\in V(G_1)\setminus \{v_1\}$ and $v\in
V(G_2)\setminus \{v_2\}$,  $d(u,v)=1$, if  $B$ is a basis of $G$,
then $B\cap V(G_i)$ is a basis of $G_i$, for $i=1,2$.  Therefore,
$B$ is the unique basis of $G$.}\end{proof}

\begin{prop}\label{pro:9,3}
There exists a uniquely $3$-dimensional graph of order $9$ and
maximum degree~$8$.
\end{prop}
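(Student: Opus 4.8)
The plan is to exhibit an explicit graph and verify the claim by a short, structured case analysis. Let $G$ have vertex set $\{v,b_1,b_2,b_3,u_1,u_2,u_3,u_4,u_5\}$, where $v$ is adjacent to all other vertices, $b_1,b_2,b_3$ are mutually adjacent, $u_1,\dots,u_5$ are pairwise non-adjacent, and the only remaining edges are $u_1b_1$, $u_2b_2$, $u_3b_3$, $u_4b_1$, $u_4b_2$, $u_5b_1$, $u_5b_3$. Since $v$ is a universal vertex, $\Delta(G)=8=n-1$ and ${\rm diam}(G)=2$ (because $G\ne K_9$). One checks that no two vertices of $G$ have the same open or closed neighborhood, so $G$ is twin-free, as it must be by Lemma~\ref{lem:notwin}.

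I would first pin down the metric dimension. By Remark~\ref{remark}, a $k$-dimensional graph of diameter $2$ has at most $k+2^k$ vertices, and $9>1+2^1$ and $9>2+2^2$, so $\beta(G)\ge 3$. For the upper bound, set $B:=\{b_1,b_2,b_3\}$; every $x\notin B$ has $d(x,b_i)\in\{1,2\}$, so $r(x|B)$ is the triple whose $i$-th entry is $1$ if $x\sim b_i$ and $2$ otherwise. Running through $v,u_1,\dots,u_5$ yields the six triples $(1,1,1),(1,2,2),(2,1,2),(2,2,1),(1,1,2),(1,2,1)$, which are pairwise distinct, so $B$ is a metric basis and $\beta(G)=3$.

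The heart of the matter is uniqueness. First, no metric basis contains $v$: if $v\in W$ with $|W|=3$, then $d(x,v)=1$ for every $x\notin W$, so only two coordinates of $r(x|W)$ vary, each in $\{1,2\}$, giving at most $2^2=4$ representations for the $6$ vertices outside $W$. Hence any metric basis is a $3$-subset $W$ of $\{b_1,b_2,b_3,u_1,\dots,u_5\}$, and it remains to show that $W\ne B$ never resolves $G$. Put $t=|W\cap\{b_1,b_2,b_3\}|$. Then $W$ contains $3-t$ of the $u_i$, so $2+t$ of them lie outside $W$, and each of those is non-adjacent to every $u$-vertex of $W$; ordering the $b$-vertices of $W$ first, their representations thus all lie in a set of size $2^t$. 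When $t\le 1$ we have $2+t>2^t$, so two of these coincide and $W$ is not resolving (concretely: two vertices with representation $(2,2,2)$ when $t=0$, and two of the three representations of the form $(\ast,2,2)$ when $t=1$). When $t=2$, write $W=\{b_i,b_j,u_l\}$ and let $\{k\}=\{1,2,3\}\setminus\{i,j\}$; since $b_1b_2b_3$ is a triangle, $b_k\sim b_i$ and $b_k\sim b_j$, so $r(b_k|W)=(1,1,\varepsilon)$ with $\varepsilon=1$ exactly when $b_k\sim u_l$. If $\varepsilon=1$ then $r(b_k|W)=(1,1,1)=r(v|W)$; if $\varepsilon=2$ then the four $u$-vertices outside $W$ have representations of the form $(\ast,\ast,2)$, so either one of them equals $(1,1,2)=r(b_k|W)$, or they realize at most the three values $(1,2,2),(2,1,2),(2,2,2)$ and two coincide. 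In every case $W$ fails to resolve $G$, so $B$ is the unique metric basis and $G$ is uniquely $3$-dimensional of order $9$ with $\Delta(G)=8$.

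I expect the only delicate step to be the $t=2$ case: the naive pigeonhole that disposes of $t\le1$ breaks down exactly at $t=2$ (there $2+t=2^t$), and closing it is what dictates the two design choices — making $b_1b_2b_3$ a triangle, so that $b_k$ is forced adjacent to both $b_i$ and $b_j$, and picking the five adjacency patterns $P(u_m)\subseteq\{b_1,b_2,b_3\}$ to be distinct, to avoid the pattern $\{b_1,b_2,b_3\}$, to separate every pair $b_i,b_j$, and to have empty intersection (so that $G$ has no second universal vertex and no twins). The patterns $\{b_1\},\{b_2\},\{b_3\},\{b_1,b_2\},\{b_1,b_3\}$ used above meet all of these. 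I would also remark, for later use, that since $\Delta(G)=n-1$ this graph can be fed into Theorem~\ref{pro:recursive}.
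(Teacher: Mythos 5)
Your proof is correct, but your graph is not the one in the paper (the degree sequences differ: the paper's example has degree sequence $8,7,7,6,6,5,3,3,3$, yours $8,6,5,5,3,3,2,2,2$), so this is a genuinely different construction. The paper takes a clique $K_6$ on $\{w_1,\dots,w_6\}$ together with an independent set $\{u_1,u_2,u_3\}$ where $u_i\sim w_i,w_{i+1},w_6$; the \emph{independent set} is the unique basis, and uniqueness is a one-line pigeonhole: any other basis $B$ meets the clique in $1$ or $2$ vertices, leaving $5$ (resp.\ $4$) clique vertices with identical representations on $B\cap W$, which the remaining $2$ (resp.\ $1$) basis vertices can separate into at most $4$ (resp.\ $2$) classes. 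Your construction instead makes the \emph{triangle} $\{b_1,b_2,b_3\}$ the basis and hangs five independent vertices with distinct adjacency patterns off it; the price is that the clean pigeonhole only disposes of $t=|W\cap\{b_1,b_2,b_3\}|\le 1$, and the boundary case $t=2$ needs the extra structural observations you supply (the triangle forces $r(b_k|W)=(1,1,\varepsilon)$, and either $b_k$ collides with the universal vertex $v$ or five vertices compete for four representations of the form $(\ast,\ast,2)$). I checked that case analysis and it is complete and correct: all subcases ($v\in W$; $t=0,1$; $t=2$ with $\varepsilon=1$ or $2$) are covered, $\beta(G)\ge 3$ follows from Remark~\ref{remark} exactly as in the paper, and $\Delta(G)=8$ holds via the universal vertex, so your graph is equally usable in Theorem~\ref{pro:recursive}. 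The paper's clique-heavy example buys a more uniform counting argument with no delicate boundary case; yours makes the design constraints (why a triangle, why those five patterns) more transparent.
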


\begin{proof}{
Let $U=\{u_1, u_2, u_3\}$ and $W=\{w_1, w_2,\ldots, w_6\}$. Also
let $G$ be graph with $V(G)=U\cup W$ and $E(G)=\{w_iw_j\ |\ 1\le
i\ne j\le 6\}\cup\{u_iw_j\ |\ 1\le i\le 3,  j=i,i+1,6\}$. We show
that $U$ is the unique basis of $G$.

 Clearly, ${\rm diam}(G)=2$. Since
$|V(G)|=9$, by Remark~\ref{remark}, $\beta(G) \geq 3$. It is easy
to see that $U$ is  resolving set and consequently  is a basis of
$G$. Now let $B$ be another basis of $G$. Since $\langle W
\rangle$ is a complete graph, $B \nsubseteq W$. Therefore,
$|B\cap W|=1$ or $2$. If $|B\cap W|=1$, then five vertices of $W$
have the same representation with respect to $B\cap W$ while
since ${\rm diam}(G)=2$,  $B\setminus W$ can not resolve five
vertices. If $|B\cap W|=2$, then four vertices of $W$ have the
same representation with respect to $B\cap W$ while  $B\setminus
W$ can not resolve $4$ vertices. These contradictions imply that
$U$ is the unique basis of $G$.}\end{proof}

In the following theorem, based on the recursive construction in
Theorem~\ref{pro:recursive}, we obtain an upper bound for
$n_0(k)$.

\begin{thm}\label{thm:5k'}
For every $k$, $k\geq2$,  there exists a uniquely $k$-dimensional
graph of order $\lceil{5k\over2}+1\rceil$.
\end{thm}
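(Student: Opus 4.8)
The plan is to combine the building blocks from Lemma~\ref{kunique}, Proposition~\ref{pro:9,3}, and the recursive gluing construction of Theorem~\ref{pro:recursive}. Recall that Theorem~\ref{pro:recursive} takes a uniquely $k_1$-dimensional graph on $n_1$ vertices with a universal vertex and a uniquely $k_2$-dimensional graph on $n_2$ vertices with a universal vertex, and produces a uniquely $(k_1+k_2)$-dimensional graph on $n_1+n_2-1$ vertices, again with a universal vertex. So the strategy is to fix a small stock of ``atoms'' — uniquely $k$-dimensional graphs with a universal vertex for small $k$ that are as economical as possible — and then iterate Theorem~\ref{pro:recursive} to assemble a uniquely $k$-dimensional graph for arbitrary $k$, controlling the total order.

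The atoms I would use are: for $k=2$, the graph from Lemma~\ref{kunique} with $n = 2 + 2^2 = 6$ (diameter $2$, maximum degree $n-1$); and for $k=3$, the graph from Proposition~\ref{pro:9,3} on $9$ vertices with maximum degree $8$. Writing $f(k)$ for the order of the graph we build, gluing two copies of the $k=2$ atom gives a uniquely $4$-dimensional graph on $6+6-1 = 11$ vertices, and more generally gluing $m$ copies of the $k=2$ atom gives a uniquely $2m$-dimensional graph on $6m - (m-1) = 5m+1$ vertices. Thus for even $k=2m$ we get order $5m+1 = \tfrac{5k}{2}+1 = \lceil \tfrac{5k}{2}+1\rceil$. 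For odd $k = 2m+3$ with $m\ge 0$, glue $m$ copies of the $k=2$ atom together with one copy of the $k=3$ atom: the order is $6m + 9 - m = 5m+9$, and one checks $5m+9 = \lceil \tfrac{5(2m+3)}{2}+1\rceil = \lceil 5m + \tfrac{15}{2}+1\rceil = \lceil 5m + \tfrac{17}{2}\rceil = 5m+9$. So in both parities the order matches $\lceil \tfrac{5k}{2}+1\rceil$ exactly, and the small base cases $k=2$ (order $6$) and $k=3$ (order $9$) are handled directly by the atoms themselves.

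The execution is then: induct on $k$, peeling off one $k=2$ atom via Theorem~\ref{pro:recursive} at each step, with base cases $k\in\{2,3\}$; at every stage the hypothesis $\Delta = n-1$ is preserved, so the recursion is legitimate. The only arithmetic to be careful about is the ceiling bookkeeping for odd $k$ — that the count $5m+9$ produced by ``$m$ twos plus one three'' really equals $\lceil \tfrac{5k}{2}+1\rceil$ when $k=2m+3$ — but this is a one-line check as above. I expect the main (and only real) obstacle to be precisely this matching of the recursive orders to the closed-form bound: one must choose the atoms so that the ``waste'' from each identification (one vertex lost per gluing) plus the sizes of the atoms telescopes exactly to $\lceil \tfrac{5k}{2}+1\rceil$, which is why the $6 = 2+2^2$ atom (contributing $5$ net per unit of dimension) and the $9$-vertex atom (contributing the correct offset for the odd case) are the right choices rather than, say, using only Lemma~\ref{kunique}.

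\begin{proof}{
For even $k=2m$ with $m\ge 1$, start with $m$ copies of the uniquely $2$-dimensional graph of order $6$, diameter $2$, and maximum degree $5$ guaranteed by Lemma~\ref{kunique}. Apply Theorem~\ref{pro:recursive} repeatedly: gluing the first two copies yields a uniquely $4$-dimensional graph of order $6+6-1=11$ with a universal vertex; gluing in the $j$-th copy for $j=3,\ldots,m$ adds $6-1=5$ vertices each time, so after all $m$ copies we obtain a uniquely $(2m)$-dimensional graph of order $6 + 5(m-1) = 5m+1$ with maximum degree one less than its order. Since $5m+1 = \tfrac{5k}{2}+1 = \lceil \tfrac{5k}{2}+1\rceil$, this settles the even case; for $k=2$ the single copy already has order $6 = \lceil \tfrac{5\cdot 2}{2}+1\rceil$.

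For odd $k=2m+3$ with $m\ge 0$, take one copy of the uniquely $3$-dimensional graph of order $9$ with maximum degree $8$ from Proposition~\ref{pro:9,3} together with $m$ copies of the order-$6$ uniquely $2$-dimensional atom above, and again apply Theorem~\ref{pro:recursive} successively. Gluing the $m$ copies of the $2$-dimensional atom onto the $3$-dimensional atom (each gluing is legitimate since every graph involved has a universal vertex, a property preserved by the construction) produces a uniquely $(2m+3)$-dimensional graph of order $9 + 5m$ with maximum degree one less than its order. A direct check gives
$$\left\lceil \frac{5(2m+3)}{2}+1\right\rceil = \left\lceil 5m + \frac{17}{2}\right\rceil = 5m+9,$$
so the order $5m+9$ equals $\lceil \tfrac{5k}{2}+1\rceil$, as required; for $k=3$ the single copy already has order $9 = \lceil \tfrac{5\cdot 3}{2}+1\rceil$. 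This completes the proof.
}\end{proof}
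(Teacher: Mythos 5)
Your proposal is correct and follows essentially the same route as the paper: both glue copies of the order-$6$ uniquely $2$-dimensional graph from Lemma~\ref{kunique} (plus one copy of the order-$9$ graph from Proposition~\ref{pro:9,3} in the odd case) via Theorem~\ref{pro:recursive}, and the arithmetic matches (your odd-case count $5m+9$ for $k=2m+3$ is the paper's $5k'+4$ with $k'=m+1$). Your explicit remark that the universal-vertex hypothesis is preserved at each gluing step is a welcome clarification the paper leaves implicit.
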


\begin{proof}{
Let $k$ be a positive integer.  If $k=2k'$, then the graph $G$
obtained  by the recursive construction given  in
Theorm~\ref{pro:recursive} from $k'$ copies of the uniquely
$2$-dimensional graph of order $6$, constructed in
Theorem~\ref{kunique} is a uniquely $k$-dimensional graph of
order $6k'-(k'-1)=5k'+1={5k\over2}+1$.

If $k=2k'+1$, then the graph $G$ obtained  by the recursive
construction  given in Theorem~\ref{pro:recursive} from $k'-1$
copies of the uniquely $2$-dimensional graph of order $6$,
constructed in Theorem~\ref{kunique}  and one copy of the
uniquely $3$-dimensional graph of order $9$ given in
Proposition~\ref{pro:9,3}, is a uniquely $k$-dimensional graph of
order $6(k'-1)-(k'-2)+8=5k'+4=\lceil{5k\over2}+1\rceil$.
}\end{proof}

Although the above theorem provides the  recursive construction for uniquely $k$-dimentional graphs of order
 $\lceil{5k\over2}+1\rceil$, to get the more explicit construction,      we construct uniquely $k$-dimensional graphs of order $3k$, in the following theorem.

\begin{thm}\label{prop:3n0<n} For each $k\geq2$, there exists
a uniquely $k$-dimensional graph of order $3k$.
\end{thm}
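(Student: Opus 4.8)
The plan is to construct such a graph directly, of diameter $2$, generalising the uniquely $3$-dimensional graph of order $9$ in Proposition~\ref{pro:9,3}. Fix $k\ge 2$, put $U=\{u_1,\dots,u_k\}$, and let $\mathcal S$ be the family of subsets of $\{1,\dots,k\}$ each of which is an \emph{initial segment} $\{1,\dots,a\}$ with $0\le a\le k$, or a \emph{final segment} $\{b,b+1,\dots,k\}$ with $1\le b\le k+1$. Since $\emptyset$ and $\{1,\dots,k\}$ are the only sets lying in both lists, $|\mathcal S|=2(k+1)-2=2k$. Define $G$ on the vertex set $U\cup W$, where $W=\{w_S:S\in\mathcal S\}$: let $W$ induce a complete graph, and join $u_i$ to $w_S$ precisely when $i\in S$. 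Then $|V(G)|=k+2k=3k$; the graph is connected because $w_{\{1,\dots,k\}}$ is adjacent to every other vertex; and ${\rm diam}(G)=2$, since no two vertices of $U$ are adjacent and $w_\emptyset$ has no neighbour in $U$, while every vertex is within distance $2$ of $w_{\{1,\dots,k\}}$.

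Next I would check that $U$ is a metric basis. For $S\in\mathcal S$ the vector $r(w_S\,|\,U)$ has $i$-th entry $1$ when $i\in S$ and $2$ otherwise, so $S\mapsto r(w_S\,|\,U)$ is injective; hence $U$ resolves $W=V(G)\setminus U$, and therefore $\beta(G)\le k$.

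The core of the argument is the following counting estimate. Let $U'\subseteq U$ and $I'=\{i:u_i\in U'\}$. Among the $2k$ vertices $w_S$ the representation $r(w_S\,|\,U')$ depends only on $S\cap I'$, and I claim it takes at most $2|U'|$ distinct values when $U'\ne\emptyset$ and exactly one value when $U'=\emptyset$. Indeed, the intersection with $I'$ of an initial (respectively final) segment of $\{1,\dots,k\}$ is an initial (respectively final) segment of $I'$, and as $S$ runs over $\mathcal S$ these intersections realise all $|I'|+1$ initial segments and all $|I'|+1$ final segments of $I'$, which coincide only in $\emptyset$ and $I'$. Consequently, in order to delete vertices of $W$ so that the remaining ones have pairwise distinct $U'$-representations, one must remove at least $2k-2|U'|=2(k-|U'|)$ of them when $U'\ne\emptyset$, and at least $2k-1$ of them when $U'=\emptyset$.

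Finally I would rule out every other resolving set of size at most $k$. Let $B$ be resolving with $|B|\le k$, and write $R=B\cap W$, $U'=B\cap U$, so $|R|+|U'|\le k$. Since $W$ is complete, every vertex of $W\setminus R$ is at distance $1$ from every vertex of $R$, so the coordinates of the metric representations indexed by $R$ are constant on $W\setminus R$; hence $B$ resolves $W\setminus R$ if and only if $U'$ does. But deleting the $|R|$ vertices of $R$ cannot make the $U'$-representations of $W\setminus R$ pairwise distinct: when $\emptyset\ne U'\subsetneq U$ this would need $|R|\ge 2(k-|U'|)$, which is impossible since $|R|\le k-|U'|<2(k-|U'|)$; and when $U'=\emptyset$ it would need $|R|\ge 2k-1$, impossible since $|R|\le k<2k-1$ for $k\ge 2$. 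Thus $B$ fails to resolve $W\setminus R$ unless $U'=U$, which forces $R=\emptyset$ and $B=U$. Hence $U$ is the only resolving set of size at most $k$, so $\beta(G)=k$ and $U$ is the unique metric basis; that is, $G$ is a uniquely $k$-dimensional graph of order $3k$. The only step needing real care is this last one — keeping the cases $U'=\emptyset$ and $\emptyset\ne U'\subsetneq U$ straight, and verifying that the number of vertices of $W$ one would have to delete to separate all $U'$-representations genuinely exceeds $|R|=|B|-|U'|$.
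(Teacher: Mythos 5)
Your construction and proof are correct. It follows the same template as the paper's: a split graph on $3k$ vertices with $W$ a clique of size $2k$, $U$ an independent set of size $k$, and each $w\in W$ assigned a distinct subset of $U$ as its neighbourhood, so that $r(w\,|\,U)$ recovers that subset and $U$ is resolving. The differences are in the choice of set system and in the uniqueness argument. The paper takes the $2k$ sets $\emptyset$, $\{u_k\}$, $\{u_i\}$ and $\{u_i,u_k\}$ for $1\le i\le k-1$, and rules out other bases by a local exchange argument: for each $u_i\notin B$ it exhibits a small collection of vertices (e.g.\ $\{u_i,w_{2i-1},w_{2i},w_{2k-1},w_{2k}\}$) that forces $B$ to contain several clique vertices which could be replaced by fewer vertices of $U$, contradicting minimality. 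You instead use the $2k$ initial and final segments of $\{1,\dots,k\}$ and prove a uniform counting bound: the trace of your set family on any $I'\subsetneq\{1,\dots,k\}$ has at most $2|I'|$ distinct members (and one member when $I'=\emptyset$), so a set $B$ with $|B\cap U|=|U'|<k$ would have to place at least $2(k-|U'|)$ vertices of the clique into $B$, exceeding the budget $k-|U'|$. Your global pigeonhole argument is arguably cleaner and more transparent than the paper's case analysis, at the cost of requiring the specific ``small trace'' property of the initial/final-segment family; both routes are valid.
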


\begin{proof}{
Let $U=\{u_1,u_2,\ldots,u_k\}$ and $W=\{w_1,w_2,\ldots,w_{2k}\}$.
Also, let $G$ be a graph with vertex set $V(G)=U\cup W$ such that
the induced subgraph $\langle W\rangle$ of $G$ be a complete
graph, $U$ be an independent set, $u_k$ be adjacent to $w_{2i}$,
$1\leq i\leq k$, and for each $i$, $1\leq i\leq k-1$, $u_i$ be
adjacent to $w_{2i-1}$ and $w_{2i}$. We prove that $G$ is the
desired graph.

Let $w_i$ and $w_j$ be two arbitrary vertices of $V(G)\setminus
U=W$. If
 $i$ and $j$ have different  parity, then $d(w_i,u_k)\neq d(w_j,u_k)$. If
$i$ and $j$ have the same  parity, then $\lfloor {i\over
2}\rfloor\neq\lfloor {j\over 2}\rfloor $ and hence $d(w_i,u_i)\neq
d(w_j,u_i)$. Therefore, $U$ is a resolving set for $G$ of size
$k$ and  $\beta(G)\leq k$.

Now let $B$ be  a metric basis of $G$. If $u_k\notin B$, then to
resolve the set $\{u_1,w_1,w_2,w_{2k-1},w_{2k}\}$, $B$ should
contain at least three vertices from this set, since
 $\langle W\rangle$ is
a complete graph, while  replacing these three vertices by $u_1$
and $u_k$ provides a resolving set with smaller size. This
contradiction implies that $u_k\in B$. If for some $i,~1\leq
i\leq k-1$, $u_i\notin B$,  then to resolve the set
$\{u_i,w_{2i-1},w_{2i},w_{2k-1},w_{2k}\}$, $B$ should contain at
least two vertices from $\{w_{2i-1},w_{2i},w_{2k-1},w_{2k}\}$,
because $\langle W\rangle$ is a complete graph. But replacing
these two vertices by $u_i$ provides a resolving set with smaller
size. This contradiction implies that $U\subseteq B$. Since $U$
is a resolving set, $U=B$ is the unique metric basis of $G$.
}\end{proof}

By Theorems~\ref{thm:n>2k} and~\ref{thm:5k'}, we have the following corollary.

\begin{cor} Let $k\ge 2$ be an integer. Then
$2k+1\le n_0(k)\le \lceil{5k\over2}+1\rceil$.
\end{cor}

For $k=2$,  $n\ge 4+d$ implies $n\ge 6$. Hence,  $n_0(2)=6$.
 It can be seen, there is no uniquely $3$-dimensional
graph of order $7$. Thus, $8\le n_0(3)\le 9$. The determination of
$n_0(k)$, for every integer $k$ could be  an nontrivial
interesting problem.

\end{document}